\newtheorem{theorem}{Theorem}
\newtheorem{lemma}{Lemma}
\newtheorem{proposition}{Proposition}
\DeclareMathOperator{\len}{length}
\DeclareMathOperator{\wid}{width}
\DeclareMathOperator{\apwid}{AP-width}
\DeclareMathOperator{\rev}{rev}
\DeclareMathOperator{\sign}{sign}
\begin{document}

\title[Almost-palindromic width of free groups]{On the almost-palindromic width of free groups}
\author{Manuel Staiger}
\address{Freie Universität\\Institut für Mathematik\\Arnimallee 2\\14195 Berlin}
\email{manuel.staiger@fu-berlin.de}
\date{June 27, 2023. \textit{Revised}:~July 4, 2024}
\subjclass[2020]{Primary 20E05, 20F05, 05E16; Secondary 20E36}

\begin{abstract}
We answer a question of Bardakov (Kourovka Notebook, Problem~19.8) which asks for the existence of a pair of natural numbers $(c, m)$ with the property that every element in the free group on the two-element set $\{a, b\}$ can be represented as a concatenation of $c$, or fewer, $m$-almost-palindromes in letters $a^{\pm 1}, b^{\pm 1}$. Here, an $m$-almost-palindrome is a word which can be obtained from a palindrome by changing at most $m$ letters. We show that no such pair $(c, m)$ exists. In fact, we show that the analogous result holds for all non-abelian free groups.
\end{abstract}

\maketitle

\pagestyle{myheadings}
\markboth{}{}

\section{Introduction and statement of the main result}

For a group $G$, an element $g\in G$, and a generating set $X\subseteq G$, let $\len(g, X)$ be the minimal $n\geq 0$ with the property that there exist $x_1, \dots, x_n \in X$ for which $g = x_1^{\pm 1}\dotsm x_n^{\pm 1}$, that is, the minimal number of elements from the generating set $X$ necessary to generate $g$. Let us define the width of $G$ with respect to the generating set $X$ as
$$\wid(G, X) = \max_{g\in G} \len(g, X) \,,$$
or $\wid(G, X) = \infty$, if the maximum does not exist.

If $B$ is a set, we denote by $F_B$ the free group on $B$ whose elements are all reduced words with letters in $B^{\pm 1} = B\cup \{b^{-1}\,\vert\, b\in B\}$, with the operation given by concatenation and subsequent reduction. We denote by $W_B$ the free monoid on the set $B^{\pm 1}$ whose elements are now all words with letters in $B^{\pm 1}$, with concatenation as operation. There is the homomorphism of monoids $r_B\colon W_B\to F_B$ which sends a word to its corresponding reduced word, and which is left-inverse to the inclusion of $F_B$ into $W_B$.

For a word $w\in W_B$, let $\rev(w)$ be its reverse word, that is, the word given by the letters of $w$ in reverse order. A palindrome is a word $p\in W_B$ with the property that $p = \rev(p)$. An $m$-almost-palindrome is a word which differs from a palindrome by a change of at most $m$ letters. In other words, $\tilde{p}\in W_B$ is an $m$-almost-palindrome if there exists a palindrome $p$ with the property that $d(p, \tilde{p})\leq m$, where $d$ is the Hamming distance on the set of all words in $W_B$ whose number of letters equals the number of letters of $\tilde{p}$, or, equivalently, if $d(\tilde{p}, \rev(\tilde{p}))\leq 2m$. Thus, the palindromes are exactly the $0$-almost-palindromes. Let us denote by $P_{B, m}$ the set of all $m$-almost-palindromes and observe that the image $r_B(P_{B, m})$ is a generating set for the free group $F_B$, for every $m\geq 0$.

If $F$ is a free group, together with a basis $B$ of $F$, then there is the canonical isomorphism $\phi_B\colon F_B\to F$. Furthermore, note that for all $m$ the set $X_{B, m} = \phi_B(r_B(P_{B, m}))$ is a generating set for $F$. In fact, we have an increasing sequence $X_{B,0}\subseteq X_{B,1}\subseteq\dots$ of generating sets of $F$. The value of the expression $\wid(F, X_{B, m})$ does not depend on the choice of the basis $B$ of $F$, since for two bases $B, B'$, the automorphism of $F$ induced by a bijection $B\to B'$ sends $X_{B, m}$ to $X_{B', m}$. We call this value the $m$-almost-palindromic width of the free group $F$ and write $\apwid(F, m) = \wid(F, X_{B, m})$.

If $F$ is trivial, or a free group of rank one, then it is readily seen that for all $m\geq 0$ the $m$-almost-palindromic width of $F$ is zero, or one. However, \citeauthor{Bardakov2005}~\cite{Bardakov2005} show that, for a non-abelian free group $F$ (i.e., a free group which is neither trivial nor infinite cyclic), the $0$-almost-palindromic width (that is, the palindromic width) of $F$ is infinite. Extending on this, Bardakov \cite[Problem~19.8]{Khukhro2018} asks whether there exists a pair of natural numbers $(c, m)$ with the property that every element of $F_{\{a, b\}}$ can be represented by a concatenation of $c$, or fewer, $m$-almost-palindromes in letters $a^{\pm 1}, b^{\pm 1}$. In other words, he asks whether there is an $m\in\mathbb{N}$ for which the $m$-almost-palindromic width of $F$ is finite, assuming that $F$ is a free group of rank two.

Of course, the same question can be asked without the restriction on the rank of~$F$. In this note, we answer this question negatively for all non-abelian free groups, building on the argument by \citeauthor{Bardakov2005} for the case $m=0$.

\begin{theorem}
\label{thm:maintheorem}
If $F$ is a non-abelian free group, then, for all $m\geq 0$, the $m$-almost-palindromic width of $F$ is infinite.
\end{theorem}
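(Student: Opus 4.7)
The plan is to construct, for each $m$, a quasi-morphism $\psi\colon F \to \mathbb{R}$ which is unbounded on $F$ yet takes values of size $O(m)$ on the generating set $X_{B,m}$. Given such a $\psi$, the standard defect inequality yields $|\psi(g)| \leq c \sup_{X_{B,m}}|\psi| + (c-1) D(\psi)$ for every $g$ that is a product of at most $c$ elements of $X_{B,m}$; unboundedness of $\psi$ then prohibits any finite $c$ from working, so $\apwid(F, m) = \infty$.

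First I would introduce the reversal map $r\colon F \to F$, defined on reduced words by reversing the letter sequence; equivalently, $r(g) = \iota(g^{-1})$ where $\iota$ is the automorphism of $F$ inverting every generator. Since reduction commutes with $\rev$, palindromic words reduce to elements fixed by $r$. Starting from any homogeneous quasi-morphism $\phi_0\colon F\to\mathbb{R}$ (for instance, a homogenized Brooks counting quasi-morphism), set
\[
\psi(g) := \phi_0(g) - \phi_0(r(g)).
\]
A direct computation, using that $r$ is an order-two anti-automorphism, shows $\psi$ is a homogeneous quasi-morphism with $D(\psi) \leq 2 D(\phi_0)$, vanishing on palindromes. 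To ensure $\psi$ is unbounded, pick $\phi_0$ with $\phi_0\neq \phi_0\circ r$; the concrete choice $\phi_0 = h_{aab}^*$ together with the witness $g_0 = aabab^{-1}$ works, as a direct cyclic subword count shows $\phi_0(g_0) \neq \phi_0(r(g_0))$ and homogeneity then yields $\psi(g_0^n) = n\psi(g_0) \to \infty$.

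The core step is the bound on $|\psi|$ on $X_{B,m}$. For $\tilde p \in P_{B,m}$, the Hamming distance between $\tilde p$ and $\rev(\tilde p)$ is at most $2m$. A single letter change from $x$ to $y$ in a word $w_1 x w_2$ alters its reduced form by left-multiplication with the conjugate $r_B(w_1)(y x^{-1}) r_B(w_1)^{-1}$. Iterating over at most $2m$ letter changes gives $r(g) = c\cdot g$ in $F$, with $g = r_B(\tilde p)$ and $c$ a product of at most $2m$ conjugates of elements $y x^{-1}$, $x, y \in B^{\pm 1}$. Crucially, homogeneity of $\phi_0$ makes it conjugation-invariant, so each conjugate factor contributes at most $K := \max_{x,y\in B^{\pm 1}}|\phi_0(y x^{-1})|$. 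Iterating the quasi-morphism inequality yields $|\phi_0(c)| \leq 2mK + (2m-1)D(\phi_0)$, and one further application gives
\[
|\psi(g)| \;=\; |\phi_0(g) - \phi_0(c g)| \;\leq\; |\phi_0(c)| + D(\phi_0) \;\leq\; 2m\bigl(K + D(\phi_0)\bigr),
\]
the desired $O(m)$ bound.

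The principal obstacle is this last step: the conjugators produced by the letter-change procedure have no a priori length bound, so without conjugation-invariance the contribution of each change could be arbitrarily large. Homogeneity of $\phi_0$ is precisely what is needed to tame them, and is the reason for insisting on a homogeneous $\phi_0$ throughout. A secondary technical matter is producing a homogeneous $\phi_0$ with $\phi_0 \neq \phi_0 \circ r$; since $r$ is an outer anti-involution for $F$ of rank at least two and the space of homogeneous quasi-morphisms of $F$ separates conjugacy classes, such a $\phi_0$ exists, and Brooks quasi-morphisms provide explicit examples.
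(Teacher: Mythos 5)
Your argument is correct, but it reaches the conclusion by a genuinely different route than the paper. The paper builds a bespoke integer-valued function $\Delta_B$, counting the sign changes in the sequence of syllable lengths of a reduced word, proves by hand the defect estimate $\lvert\Delta_B(w_1\dotsm w_n)-\sum_i\Delta_B(w_i)\rvert\le 6n$, bounds $\Delta_B$ on an $m$-almost-palindrome by splitting the word at the at most $m$ changed letters and applying that estimate twice, and witnesses unboundedness with $w_n=a^1b^1a^2b^2\dotsm a^nb^n$. You instead antisymmetrize an off-the-shelf homogeneous Brooks quasi-morphism under the reversal anti-automorphism $r$, and control the resulting $\psi$ on almost-palindromes via the observation that a single letter change multiplies the reduced form on the left by a conjugate of $yx^{-1}$, which conjugation-invariance of homogeneous quasi-morphisms then tames. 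Both proofs follow the same philosophy (a bounded-defect functional that is $O(m)$ on $P_{B,m}$ yet unbounded on $F$), but your key lemma --- the conjugate-decomposition of a letter change combined with homogeneity --- is substantively different from the paper's syllable-splitting argument; in exchange for invoking standard quasi-morphism machinery, you avoid designing and analysing an ad hoc counting function, while the paper's version is entirely elementary and self-contained. Your concrete witness checks out: the cyclic word of $aabab^{-1}$ contains one occurrence of $aab$ while that of its reversal $b^{-1}abaa$ contains none, so $\psi$ is indeed nonzero, hence unbounded by homogeneity. One cosmetic remark: you do not need the (delicate) assertion that homogeneous quasi-morphisms separate conjugacy classes, since your explicit example already settles the existence of a suitable $\phi_0$.
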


Throughout this note, whenever $u, w\in W_B$, in writing $uw$ we always mean the concatenation of the words $u$ and $w$, that is, the result under the operation in the monoid $W_B$, in order to avoid ambiguity which may arise from the fact that $F_B\subseteq W_B$. For instance, we explicitly write $r_B(uw)$ to denote the result under the operation in the group $F_B$, if $u$ and $w$ are reduced words. Moreover, if $u, w\in W_B$ are words, we mean by $u = w$ that the two are equal as elements of $W_B$, i.e., in a letter-by-letter way.
We denote the empty word by $\varepsilon$. If $t\in B^{\pm 1}$ and $k\in \mathbb{N}$, then we denote by $t^k$ the word which consists of $k$-times the letter $t$. In particular, $t^0 = \varepsilon$, while $t^1$ denotes the word which consists of the single letter $t$. If $t\in B$ and $k<0$, then we write $t^k=(t^{-1})^{-k}$, that is, $(-k)$-times the letter $t^{-1}\in B^{\pm 1}$.

\section{A map with a useful property}

If $B$ is a set, and $w\in F_B$, then, for $n\geq 0$, we may write $w = t_1^{k_1}\dotsm t_n^{k_n}$, where $t_i\in B$ and $k_i\in \mathbb{Z}\setminus\{0\}$ for all $i\in\{1,\dots,n\}$, and where $t_i\neq t_{i+1}$ for all $i\in\{1,\dots, n-1\}$. In this way, the number $n$, as well as the $k_i$ and $t_i$ are uniquely determined. If $n=0$, this means that $w = \varepsilon$. We call the words ${t_i}^{k_i}$, for $i\in\{1,\dots, n\}$, the syllables of the reduced word $w$.

Let the map $\Delta_B\colon W_ B\to \mathbb{Z}$ be defined in two steps as follows: First, if $w\in F_B$ is of the form $w = t^k$ for $t\in B, k\in\mathbb{Z}$, then we define
$$\Delta_B(w) = \Delta_B(t^k) = 0 \,.$$
In particular, $\Delta_B(\varepsilon) = 0$. Otherwise, $w\in F_B$ has at least two syllables, that is, it is of the form $w = t_1^{k_1}\dotsm t_n^{k_n}$, where the $t_i^{k_i}$ are the syllables of $w$, with $t_i\in B$ for all $i\in\{1,\dots, n\}$, for $n\geq 2$. In this case, we set
$$\Delta_B(w) = \Delta_B(t_1^{k_1}\dotsm t_n^{k_n}) = \sum_{i=1}^{n-1} \sign(\lvert k_{i+1}\rvert - \lvert k_{i} \rvert) \,.$$
Here, $\sign\colon \mathbb{Z}\to \mathbb{Z}$ is the map which takes the value $-1$ on all negative integers, the value $1$ on all positive integers, and for which $\sign(0) = 0$.
This concludes the definition of $\Delta_B$ on $F_B$. In order to extend this definition to all of $W_B$, we define
$$\Delta_B(w) = \Delta_B(r_B(w)) \,,$$
for a not necessarily reduced word $w\in W_B$.
\pagebreak

The map $\Delta_B$ has the following useful property which resembles \cite[Lemma~1.3]{Bardakov2005}.
\begin{lemma}
\label{lem:lemma}
Let $n\geq 0$ be an integer and $w_1,\dots ,w_n\in W_B$. Then
$$\left\lvert \Delta_B(w_1\dotsm w_n) - \sum_{i=1}^n \Delta_B(w_i)\right\rvert \leq 6n \,.$$
\end{lemma}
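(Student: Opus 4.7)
The plan is to reduce the statement to a bound of $6$ for the two-word case and then induct on $n$. Specifically, I would first prove that for all reduced $u, v \in F_B$,
\[ \lvert \Delta_B(r_B(uv)) - \Delta_B(u) - \Delta_B(v) \rvert \leq 6 \,. \]
Since $\Delta_B$ on $W_B$ factors through $r_B$ by definition, and since $r_B(uv) = r_B(r_B(u) \cdot r_B(v))$, this suffices.

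For such $u, v$, let $u = u_1 \dotsm u_p$ and $v = v_1 \dotsm v_q$ be their syllable decompositions. In reducing $uv$, let $i \geq 0$ be the largest integer for which $u_{p-j+1} = v_j^{-1}$ for all $j \in \{1, \dots, i\}$; that is, the last $i$ syllables of $u$ cancel completely with the first $i$ syllables of $v$. Depending on whether $u_{p-i}$ and $v_{i+1}$ additionally interact (same base letter) or not, the syllable decomposition of $r_B(uv)$ is either $u_1, \dots, u_{p-i}, v_{i+1}, \dots, v_q$ or $u_1, \dots, u_{p-i-1}, s, v_{i+2}, \dots, v_q$, where $s$ is a merged or partially cancelled syllable (with suitable modifications in edge cases when $p-i$ or $q-i$ is small).

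The crucial observation---and the main technical step---is that the ``internal'' $\Delta_B$-contributions from the cancelled syllable pairs of $u$ and $v$ exactly annihilate. Using $\lvert u_{p-j+1} \rvert = \lvert v_j \rvert$, the pair $(u_{p-j}, u_{p-j+1})$ in $\Delta_B(u)$ contributes $\sign(\lvert v_j \rvert - \lvert v_{j+1} \rvert)$, while the pair $(v_j, v_{j+1})$ in $\Delta_B(v)$ contributes the opposite $\sign(\lvert v_{j+1} \rvert - \lvert v_j \rvert)$; the two cancel termwise. What remains is a bounded boundary discrepancy: at most two lost ``boundary'' pairs (one from $u$, one from $v$), at most two further lost or modified pairs adjacent to a merged syllable, and at most two new pairs appearing in $\Delta_B(r_B(uv))$ at the seam. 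Each term is in $\{-1, 0, 1\}$, giving the claimed bound of $6$.

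Granting this two-word estimate, the lemma follows by induction on $n$: applying the estimate to $(w_1 \dotsm w_{n-1}) \cdot w_n$ gives
\[ \lvert \Delta_B(w_1 \dotsm w_n) - \Delta_B(w_1 \dotsm w_{n-1}) - \Delta_B(w_n) \rvert \leq 6 \,, \]
and combining with the inductive hypothesis yields a total error of at most $6(n-1) + 6 \leq 6n$. The main obstacle will be the careful bookkeeping in the syllable analysis, especially managing the edge cases (when $u$ or $v$ cancels entirely, or when $p-i, q-i \leq 1$). However, the core mechanism---the cancellation of the internal $\Delta_B$-contributions, which depends only on the symmetry $\lvert u_{p-j+1} \rvert = \lvert v_j \rvert$ forced by $u_{p-j+1} = v_j^{-1}$---is what makes the per-concatenation error a constant rather than scaling with the amount of cancellation.
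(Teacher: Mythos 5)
Your proposal is correct and follows essentially the same route as the paper: reduce to the two-word case, observe that the $\Delta_B$-contributions internal to the maximal cancelling block annihilate (the paper phrases this as $\Delta_B(z)+\Delta_B(z^{-1})=0$ for the cancelled part $z$ consisting of whole syllables), bound the remaining seam discrepancy by $6$, and induct to get $6(n-1)+6\leq 6n$. The bookkeeping differs only cosmetically — the paper lumps the boundary terms into three quantities $\epsilon_1,\epsilon_2,\epsilon_3\in\{-2,\dots,2\}$ rather than six terms in $\{-1,0,1\}$ — so no further comparison is needed.
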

\begin{proof}
If $n = 0$, then $\Delta_B(w_1\dotsm w_n) = \Delta_B(\varepsilon) = 0$, as well as $\sum_{i=1}^n \Delta_B(w_i) = 0$. Thus, for $n=0$ the inequality holds. If $n = 1$, the inequality is obviously true.

For $n = 2$, we consider two cases: First, if $r_B(w_2) = r_B(w_1)^{-1}$, then $\Delta_B(w_1 w_2) = \Delta_B(r_B(w_1 w_2)) = \Delta_B(r_B(w_1)r_B(w_2)) = \Delta_B(\varepsilon) = 0$. Also, it is readily seen that $\Delta_B(w_1) + \Delta_B(w_2) = \Delta_B(r_B(w_1)) + \Delta_B(r_B(w_1)^{-1}) = 0$, so, in this case,
$$\lvert \Delta_B(w_1 w_2) - (\Delta_B(w_1)+\Delta_B(w_2))\rvert = 0\leq 12 = 6n \,.$$
Otherwise, let $r_B(w_1) = t_n^{k_n}\dotsm t_1^{k_1}$ and $r_B(w_2) = s_1^{l_1}\dotsm s_m^{l_m}$ be the decompositions of $r_B(w_1)$ and $r_B(w_2)$ into syllables, where $t_i\in B, k_i\in \mathbb{Z}\setminus\{0\}$ for all $i\in\{1,\dots ,n\}$, and $s_i\in B, l_i\in \mathbb{Z}\setminus\{0\}$ for all $i\in\{1,\dots ,m\}$. Let $j$ be the largest index with the property that $0\leq j\leq \min\{n, m\}$, and that ${t_1}\!\!^{-k_1}\dotsm {t_j}\!\!^{-k_j} = {s_1}\!^{l_1}\dotsm {s_j}^{l_j}$. We set $z = {s_1}\!^{l_1}\dotsm {s_j}^{l_j}$ (i.e., $z = \varepsilon$, if $j=0$) and, since $r_B(w_2)\neq r_B(w_1)^{-1}$, we may write
\begin{align*}
r_B(w_1) &= u_1 t^k z^{-1} \,, \\
r_B(w_2) &= z t^l u_2 \,,
\end{align*}
where $t\in B$ is a letter, $k, l\in \mathbb{Z}$ with $k+l\neq 0$, where neither $u_1$ ends with the letter $t$ or $t^{-1}$, nor $u_2$ begins with $t$ or $t^{-1}$, and where $u_1$ and $z^{-1}$ each consist of whole syllables of $r_B(w_1)$ (i.e., the sum of the numbers of syllables of the reduced words $u_1$, $t^k$, and $z^{-1}$ is equal to the number of syllables of $r_B(w_1)$), and where $z$ and $u_2$ each consist of whole syllables of $r_B(w_2)$. Here, it may happen that $u_1$, $u_2$, or $z$, of course, are empty. Also, one of the numbers $k$ and $l$ may be zero.

Hence, we observe that
\begin{align*}
\Delta_B(w_1) &=  \Delta_B(u_1) + \epsilon_1 + \Delta_B(z^{-1}) \,, \\
\Delta_B(w_2) &= \Delta_B(z) +\epsilon_2 + \Delta_B(u_2) \,,
\end{align*}
where $\epsilon_1, \epsilon_2\in \{-2, -1, 0, 1, 2\}$. Furthermore,
$$\Delta_B(w_1 w_2) = \Delta_B(u_1 t^k z^{-1} z t^l u_2) = \Delta_B(u_1 t^{k+l} u_2) = \Delta_B(u_1) +\epsilon_3 + \Delta_B(u_2) \,,$$
where, similarly, $\epsilon_3 \in \{-2, -1, 0, 1, 2\}$. Now,
$$\lvert\Delta_B(w_1 w_2) - (\Delta_B(w_1)+\Delta_B(w_2))\rvert = \lvert-\epsilon_1-\epsilon_2+\epsilon_3\rvert \leq 6 \leq 12 = 6n \,.$$

Finally, for $n\geq 3$, we proceed by induction: Assuming that the assertion holds for $n-1$, and applying the considerations for the case $n = 2$, we see that
\begin{align*}
\Bigg\lvert \Delta_B(w_1\dotsm w_n) - \sum_{i=1}^n \Delta_B(w_i)\Bigg\rvert &= \Bigg\lvert \Delta_B(w_1\dotsm w_n) - (\Delta_B(w_1\dotsm w_{n-1}) + \Delta_B(w_n)) \\
&\qquad + (\Delta_B(w_1\dotsm w_{n-1}) + \Delta_B(w_n)) -\sum_{i=1}^n \Delta_B(w_i) \Bigg\rvert \\
&\leq \Bigg\lvert \Delta_B(w_1\dotsm w_n) - (\Delta_B(w_1\dotsm w_{n-1}) + \Delta_B(w_n))\Bigg\rvert \\
&\qquad + \left\lvert \Delta_B(w_1\dotsm w_{n-1}) - \sum_{i=1}^{n-1} \Delta_B(w_i) \right\rvert \\
&\leq 6 + 6(n-1) \\
&= 6n \,,
\end{align*} \enlargethispage*{0.2cm}
completing the proof.
\end{proof}

\section{Bounds for some values of the map}

Throughout this section, let $B$ be a fixed set. We simplify notation and put $F = F_B$,  $W = W_B$, $P_m = P_{B, m}$, $r = r_B$, and $\Delta = \Delta_B$. In this context, the following propositions hold.

\begin{proposition}
\label{prop:single-ap-bound}
If $\tilde{p}\in P_m$ is an $m$-almost-palindrome, then
$$\Delta(\tilde{p})\leq 24m + 12 \,.$$
\end{proposition}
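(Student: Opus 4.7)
The plan is to use the characterization, mentioned in the introduction, that an $m$-almost-palindrome $\tilde{p}$ of length $N$ admits a palindrome $p \in W$ of the same length with Hamming distance $d(\tilde{p}, p) \leq m$. My strategy has three ingredients, culminating in an application of Lemma~\ref{lem:lemma}.

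First, I would show that $\Delta(p) = 0$ for every palindrome $p \in W$. Since cancellations and reversal commute (a cancellable pair $xx^{-1}$ becomes $x^{-1}x$ under reversal and is still cancellable), the reduced word $r(p)$ is itself a palindrome. Writing $r(p) = t_1^{k_1}\dotsm t_n^{k_n}$ in syllables, the identity $\rev(r(p)) = t_n^{k_n}\dotsm t_1^{k_1} = r(p)$ forces $t_i = t_{n+1-i}$ and $k_i = k_{n+1-i}$, so in particular $\lvert k_i\rvert = \lvert k_{n+1-i}\rvert$. The index substitution $j = n-i$ in the defining sum of $\Delta$ then shows $\Delta(p) = -\Delta(p) = 0$.

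Second, I would exploit the fact that $\tilde{p}$ and $p$ differ in at most $m$ positions $i_1 < \dots < i_s$ (where $s\leq m$) to decompose both words as concatenations of $2s+1 \leq 2m+1$ factors:
\begin{align*}
\tilde{p} &= A_0\,b_1\,A_1\,b_2\,A_2\,\dotsm\,b_s\,A_s \,, \\
p &= A_0\,c_1\,A_1\,c_2\,A_2\,\dotsm\,c_s\,A_s \,,
\end{align*}
where the $A_j \in W$ are the common (possibly empty) substrings between consecutive differing positions and $b_j, c_j \in B^{\pm 1}$ are the possibly differing single letters at position $i_j$.

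Finally, I would apply Lemma~\ref{lem:lemma} to each of the two decompositions. Since every single letter is a word of the form $t^{\pm 1}$, we have $\Delta(b_j) = \Delta(c_j) = 0$, so the two sums $\sum_j \Delta(A_j) + \sum_j \Delta(b_j)$ and $\sum_j \Delta(A_j) + \sum_j \Delta(c_j)$ coincide. Lemma~\ref{lem:lemma} bounds the discrepancy between each of $\Delta(\tilde{p})$ and $\Delta(p)$ and this common sum by $6(2m+1) = 12m+6$, and the triangle inequality yields $\lvert\Delta(\tilde{p}) - \Delta(p)\rvert \leq 24m+12$. Combined with $\Delta(p) = 0$, this gives the asserted bound. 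The main obstacle is really only the first ingredient, where the particular form of the definition of $\Delta$ (using $\sign$ of differences of absolute values) is used essentially, together with the commutation of reduction and reversal.
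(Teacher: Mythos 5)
Your proposal is correct and follows essentially the same route as the paper: decompose $p$ and $\tilde{p}$ along the at most $m$ changed positions into at most $2m+1$ factors, note that the single-letter factors and the palindrome itself have $\Delta$-value zero, and apply Lemma~\ref{lem:lemma} to both decompositions (the paper phrases this as two one-sided estimates rather than a triangle inequality, but the computation is identical and yields the same bound $24m+12$). Your explicit verification that $\Delta(p)=0$ for every palindrome --- via the commutation of reduction with reversal and the symmetry of the resulting syllable decomposition --- is a detail the paper asserts without proof, and your argument for it is correct.
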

\begin{proof}
Let $p$ be a palindrome from which $\tilde{p}$ is obtained by changing $m$ or fewer letters. Let $t_1,\dots , t_\alpha\in B^{\pm 1}$ be all the letters of $p$ which are subject to the change, in the order in which they appear within $p$, so that
$$p = w_1 t_1^1 w_2 t_2^1\dotsm t_\alpha^1 w_{\alpha+1} \,,$$
where the $w_i$ are (potentially empty) words. Here, $\alpha\leq m$.

By Lemma~\ref{lem:lemma}, we have
$$\Delta(p) - \sum_{i=1}^{\alpha+1} \Delta(w_i) - \sum_{i=1}^\alpha \Delta(t_i^1) \geq -6\cdot (2\alpha + 1) \,.$$
Here, $p$ is a palindrome, and the $t_i^1$ are words with exactly one letter, so $\Delta(p) = 0$, as well as $\Delta(t_i^1) = 0$ for all $i\in\{1,\dots ,\alpha\}$. Hence, the latter inequality can be simplified to yield
$$\sum_{i=1}^{\alpha+1} \Delta(w_i) \leq 12\alpha + 6 \,.$$

Let $\tilde{t}_i$ be the letter $t_i$ after the relevant change, so that
$$\tilde{p} = w_1 \tilde{t}_1^1 w_2 \tilde{t}_2^1\dotsm \tilde{t}_\alpha^1 w_{\alpha+1} \,.$$
Still, we have $\Delta(\tilde{t}_i^1) = 0$, for all $i\in\{1,\dots ,\alpha\}$.

Now, using Lemma~\ref{lem:lemma}, together with the above observations, we arrive at the desired inequality
\begin{align*}
\Delta(\tilde{p}) &= \Delta(w_1 \tilde{t}_1^1 w_2 \tilde{t}_2^1\dotsm \tilde{t}_\alpha^1 w_{\alpha+1}) \\
&\leq \left(\sum_{i=1}^{\alpha+1} \Delta(w_i)\right) + \left( \sum_{i=1}^\alpha \Delta(\tilde{t}_i^1)\right) + 6\cdot (2\alpha + 1) \\
&\leq (12\alpha + 6) + 0 + (12\alpha + 6) \\
&= 24\alpha + 12 \\
&\leq 24m + 12 \,.
\end{align*}
\end{proof}

\begin{proposition}
\label{prop:concat-ap-bound}
Let $c\geq 0$ be an integer, and let $\tilde{p}_1,\dots,\tilde{p}_c\in P_m$ be $m$-almost-palindromes. Then
$$\Delta(\tilde{p}_1\dotsm\tilde{p}_c) \leq 24mc + 18c \,.$$
\end{proposition}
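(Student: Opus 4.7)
The plan is to combine Lemma~\ref{lem:lemma} with Proposition~\ref{prop:single-ap-bound} in the most direct way possible. This proposition is really just an immediate corollary of the two preceding results, so no new ideas are needed.

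First, I would apply Lemma~\ref{lem:lemma} with $n = c$ and $w_i = \tilde{p}_i$ to obtain
$$\Delta(\tilde{p}_1\dotsm\tilde{p}_c) \leq \sum_{i=1}^c \Delta(\tilde{p}_i) + 6c \,,$$
using only the upper half of the two-sided bound (which is all the one-sided statement in the proposition requires).

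Next, since each $\tilde{p}_i\in P_m$, Proposition~\ref{prop:single-ap-bound} gives $\Delta(\tilde{p}_i)\leq 24m+12$ for each $i\in\{1,\dots,c\}$, so summing yields
$$\sum_{i=1}^c \Delta(\tilde{p}_i) \leq c(24m+12) = 24mc + 12c \,.$$
Combining the two inequalities gives
$$\Delta(\tilde{p}_1\dotsm\tilde{p}_c) \leq 24mc + 12c + 6c = 24mc + 18c \,,$$
as required. I would also remark on the boundary case $c=0$, where both sides equal $0$ (interpreting the empty product as $\varepsilon$), so the bound holds trivially.

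There is no real obstacle here; the only thing to be careful about is bookkeeping of the two additive contributions $12c$ (from the per-palindrome bound) and $6c$ (from the concatenation error in Lemma~\ref{lem:lemma}), which together produce the constant $18c$ in the statement.
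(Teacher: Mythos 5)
Your proof is correct and follows exactly the same route as the paper: apply Lemma~\ref{lem:lemma} to bound $\Delta(\tilde{p}_1\dotsm\tilde{p}_c)$ by $\sum_i\Delta(\tilde{p}_i)+6c$, then bound each summand by $24m+12$ via Proposition~\ref{prop:single-ap-bound}. The extra remark on the case $c=0$ is a harmless addition.
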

\begin{proof}
We use Lemma~\ref{lem:lemma} and then apply Proposition~\ref{prop:single-ap-bound} to see that
\begin{align*}
\Delta(\tilde{p}_1\dotsm\tilde{p}_c) &\leq \left(\sum_{i=1}^c \Delta(\tilde{p}_i)\right) + 6c \\
&\leq c\cdot (24m + 12) + 6c \\
&=24mc + 18c \,.
\end{align*}
\end{proof}

\section{Proof of the main result}

\begin{proof}[Proof of Theorem~\ref{thm:maintheorem}.]
Let $F$ be a non-abelian free group, and $m\geq 0$. Since $F$ is non-abelian, a basis $B$ has at least two distinct elements $a, b\in B$. Consider the sequence of words $(w_n)_{n\in\mathbb{N}}$ defined by
$$w_n = a^1 b^1 a^2 b^2\dotsm a^n b^n \in F_B \,.$$
It is readily seen that $\Delta_B(w_n) = n-1$. In particular, $\Delta_B(w_n)\to\infty$ for $n\to\infty$.

On the other hand, for $c\geq 0$, Proposition~\ref{prop:concat-ap-bound} shows that $\Delta_B$ is bounded from above on the set $r_B({P_{B, m}}^c)\subseteq F_B$, where ${P_{B, m}}^c = \{p_1\dotsm p_c \,\vert\, p_i\in P_{B, m} \text{ for all } i\in\{1,\dots, c\}\}$.
Hence, $r_B({P_{B, m}}^c)\neq F_B$ for all $c\geq 0$. This implies that for all $c\geq 0$ it holds that also
$$\phi_B(r_B({P_{B, m}}^c))=\{x_1 \dotsm x_c \,\vert\, x_i\in X_{B, m} \text{ for all } i\in \{1,\dots ,c\}\} \neq F \,.$$
Thus, $\apwid(F, m) = \infty$.
\end{proof}

\section*{Acknowledgments}

The author would like to thank Pavle Blagojević and Kıvanç Ersoy for their continuous support, as well as Tatiana Levinson and Nikola Sadovek for useful discussions. 

\printbibliography

@Misc{Khukhro2018,
  author = {Khukhro, E. I. and Mazurov, V. D.},
  title  = {Unsolved Problems in Group Theory. The Kourovka Notebook. No. 19},
  year   = {2018}
}

@Article{Bardakov2005,
  author  = {Bardakov, V. and Shpilrain, V. and Tolstykh, V.},
  title   = {On the palindromic and primitive widths of a free group},
  journal = {Journal of Algebra},
  year    = {2005},
  volume  = {285},
  number  = {2},
  pages   = {574-585}
}
\end{document}